\newcommand{\C}{\mathbb{C}}
\newcommand{\Z}{\mathbb{Z}}
\newcommand{\dyckpath}[4]{
    \fill[white!25]  (#1) rectangle +(#2,#3);
    \fill[fill=white]
    (#1)
    \foreach \dir in {#4}{
        \ifnum\dir=0
        -- ++(1,1)
        \else
        -- ++(1,-1)
        \fi
    } |- (#1);
    \draw[help lines] (#1) grid +(#2,#3);
    \coordinate (prev) at (#1);
    \foreach \dir in {#4}{
        \ifnum\dir=0
        \coordinate (dep) at (1,1);
        \else
        \coordinate (dep) at (1,-1);
        \fi
        \draw[line width=2pt,-stealth] (prev) -- ++(dep) coordinate (prev);
    };
}
    \newtheorem{theorem}{Theorem}[section]
    \newtheorem{prop}[theorem]{Proposition}
    \newtheorem{corollary}[theorem]{Corollary}
    \newtheorem*{thm*}{Theorem}
\theoremstyle{definition}
    \newtheorem{definition}[theorem]{Definition}
    \newtheorem{example}[theorem]{Example}
\theoremstyle{remark}
    \newtheorem{remark}[theorem]{Remark}
\numberwithin{equation}{section}
\title{Plane Partitions and Spin Adapted Quantum States}
\author{Abigail Price, Ada Stelzer and Svala Sverrisdóttir}
\address{Dept.~of Mathematics, U.~Illinois at Urbana-Champaign, Urbana, IL 61801, USA and Dept.~of Mathematics, University of California at Berkeley, CA 94720, USA}
\email{price29@illinois.edu, astelzer@illinois.edu, svala@math.berkeley.edu}
\date{\today}
\begin{document}

\begin{abstract}
   We describe an explicit basis for the $\operatorname{SU}(2)$-invariant space of the exterior power $\wedge_{2k} \C^{2m}$ via the combinatorics of plane partitions. 
   In quantum chemistry, this is the space of spin adapted quantum states of an electronic system with $m$ spin orbitals and $k$ electron pairs.
   We construct our basis by identifying the invariant space with an Artinian commutative ring called the \emph{excitation ring}. 
   We compute a Gröbner basis and enumerate its standard monomials via an explicit bijection to Dyck paths counted by the Narayana numbers.
\end{abstract}

\maketitle

\section{Introduction}

Fix positive integers $1 \le k \le m$ and consider the indeterminate $k \times (m - k)$ matrix $X = (X_{i,j})$. 
We write $\C[X]$ for the polynomial ring generated by the $k(m - k)$ variables $X_{i, j}$ under lexicographic term order (i.e., $X_{i,j} > X_{i',j'}$ if $i < i'$ or if $i = i'$ and $j < j'$).
Let $I_{m,k}\subseteq\C[X]$ be the homogeneous ideal generated by the following cubics:
\begin{equation}\label{eq:gens}
    f_{p, q, r}^{a, b, c} = \sum_{\sigma \in \mathfrak{S}_{\{a,b,c\}}} X_{p,\sigma(a)}X_{q,\sigma(b)} X_{r,\sigma(c)}, \quad \text{ where } p \le q \le r \text{ and } a \le b \le c.
\end{equation}
Here, $\mathfrak{S}_{\{a,b,c\}}$ denotes the set of permutations of the $3$-element multiset $\{a,b,c\}$.
Each cubic $f_{p, q, r}^{a, b, c}$ has $1, 2, 3$ or $6$ distinct terms, with positive integer coefficients summing to $6$. For example, all monomial generators are of the form 
$f_{p, p, p}^{a, b, c} = 6X_{p,a}X_{p,b}X_{p,c}$ or $f_{p, q, r}^{a, a, a} = 6X_{p,a}X_{q,a}X_{r,a}$.

\begin{example}
    Let $m = 4$ and $k = 2$. The ideal $I_{4,2}$ is generated by $16$ cubics of the form (\ref{eq:gens}). There are $12$ monomial generators in this case:
    \begin{align*}
        6X_{1,1}^3,\ 6X_{1,2}^3,&\ 6X_{2,1}^3,\ 6X_{2,2}^3,\\
        6X_{1,1}^2X_{1,2},\ 6X_{1,1}X_{1,2}^2,\  6X_{2,1}^2X_{2,2},\ 6X_{2,1}X_{2,2}^2,&\ 
    6X_{1,1}^2X_{2,1},\ 6X_{1,1}X_{2,1}^2,\  6X_{1,2}^2X_{2,2},\ 6X_{1,2}X_{2,2}^2.
    \end{align*}
    There are also $4$ binomial generators, presented here with their leading terms underlined:
    \begin{align*}
        &f_{1, 1, 2}^{1, 1, 2} = \underline{2X_{1,1}^2X_{2,2}} + 4X_{1,1}X_{1,2}X_{2,1}, \quad f_{1, 2, 2}^{1, 2, 2} = \underline{2X_{1,1}X_{2,2}^2} + 4X_{1,2}X_{2,1}X_{2,2},\\
    &f_{1, 1, 2}^{1, 2, 2} = \underline{4X_{1,1}X_{1,2}X_{2,2}} + 2X_{1,2}^2X_{2,1}, \quad
    f_{1, 2, 2}^{1, 1, 2} = \underline{4X_{1,1}X_{2,1}X_{2,2}} + 2X_{1,2}X_{2,1}^2.
    \end{align*}
    The quotient $\C[X]/I_{4,2}$ has Krull dimension $0$ and degree $20$.
\end{example}

Our main result enumerates a vector space basis for $\C[X]/I_{m, k}$:

\begin{theorem}\label{thm:degofex}
    The quotient ring $S_{m, k} := \C[X]/I_{m, k}$ is a vector space of dimension
    \[\dim_\C(S_{m, k}) = \frac{1}{m+1}\binom{m+1}{k+1}\binom{m+1}{k}.\]
\end{theorem}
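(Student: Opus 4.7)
The plan is to compute a Gröbner basis for $I_{m,k}$ under the given lex order and then enumerate the standard monomials via a bijection to Dyck paths. The target is the Narayana number $N(m+1,k+1)=\frac{1}{m+1}\binom{m+1}{k+1}\binom{m+1}{k}$, which enumerates Dyck paths of semilength $m+1$ with $k+1$ peaks (and, equivalently, plane partitions in a $k\times(m-k)\times 2$ box via MacMahon's formula). First I would verify that the lex-leading monomial of $f_{p,q,r}^{a,b,c}$ (for $p\le q\le r$ and $a\le b\le c$) is $X_{p,a}X_{q,b}X_{r,c}$: since $X_{i,j}$ is lex-highest when $(i,j)$ is smallest, pairing the $s$-th smallest row index with the $s$-th smallest column index places the largest variables in the highest slots, and any other pairing strictly decreases the monomial. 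I would then argue that $\{f_{p,q,r}^{a,b,c}\}$ is already a Gröbner basis by checking Buchberger's criterion; the S-polynomial reductions should organize into a straightening-law argument, and the $\mathfrak{S}_{\{a,b,c\}}$-symmetry built into the generators should collapse the case work.

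Granted the Gröbner basis, the initial ideal is $\init(I_{m,k})=\langle X_{p,a}X_{q,b}X_{r,c}\,:\,p\le q\le r,\ a\le b\le c\rangle$, and a monomial $\prod X_{i,j}^{\mu(i,j)}$ is standard iff the multiset of cells $(i,j)\in[k]\times[m-k]$ taken with multiplicity $\mu(i,j)$ contains no weakly increasing chain of length $3$ under the product order. In particular $\mu(i,j)\in\{0,1,2\}$; any cell with $\mu(i,j)=2$ is incomparable to every other support cell; and the support itself contains no strict $3$-chain. By Mirsky's theorem such a multiset decomposes into two antichains, each of which corresponds (via the standard staircase-to-path bijection) to a monotone lattice path from $(0,m-k)$ to $(k,0)$. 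Choosing a canonical decomposition yields a non-crossing pair of such paths, which a Lindström--Gessel--Viennot-type calculation shows are counted by $\binom{m}{k}^2-\binom{m}{k-1}\binom{m}{k+1}$; a direct algebraic manipulation (or a shuffle into a single Dyck path of semilength $m+1$ with $k+1$ peaks) confirms that this equals $\frac{1}{m+1}\binom{m+1}{k+1}\binom{m+1}{k}$.

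I expect the main obstacle to be the last step: making the pair-of-paths assignment canonical and bijective. The Mirsky decomposition is inherently non-unique in the presence of $\mu=2$ cells, so the combinatorial heart of the argument is selecting a specific rule (for instance, greedy peeling of minimal antichains while treating each $\mu=2$ cell as two copies) and verifying that the induced map is inverted by a matching reconstruction of the multiset from a pair of paths. By comparison, the Buchberger step should be largely routine bookkeeping once the leading-term calculation is in hand, and the characterization of standard monomials as forbidden-chain multisets is immediate from the shape of the initial ideal.
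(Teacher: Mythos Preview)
Your outline is sound and lands on the same two structural pillars as the paper: the generators are already a lex Gr\"obner basis, and the standard monomials are the $k\times(m-k)$ exponent matrices containing no weakly increasing $3$-chain (the paper phrases this as ``$\mathsf{width}(M)\le 2$''). One practical difference: for the Buchberger step the paper does not attempt a straightening argument. It observes that any two generators with non-coprime leading terms involve at most five row indices and five column indices, so it suffices to verify the Gr\"obner property for $I_{10,5}$, which is done by machine. Your proposed straightening proof may well work, but it is not what the paper does, and you should be prepared to fall back on the $5\times 5$ reduction if the case analysis becomes unwieldy.

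The enumeration step is where you genuinely diverge. The paper applies RSK to send width-$\le 2$ matrices to pairs of SSYT with at most two columns, then to plane partitions in $\mathcal{B}(k,m-k,2)$, and finishes with MacMahon's product formula. You instead propose Mirsky $\to$ pair of antichains $\to$ pair of non-crossing lattice paths $\to$ LGV determinant $\binom{m}{k}^2-\binom{m}{k-1}\binom{m}{k+1}$. These are closely related: a plane partition in $\mathcal{B}(k,m-k,2)$ is exactly a nested pair of Young diagrams in the $k\times(m-k)$ box, i.e., a nested pair of lattice paths, so both routes factor through the same intermediate object. What the paper buys by invoking RSK is precisely the canonical, invertible antichain decomposition you flag as your main obstacle: for width $\le 2$, Schensted insertion is the greedy ``peel off the minimal antichain'' rule, and its inverse is built in. Your route is more elementary in that it avoids general RSK and MacMahon, but you will end up reproving the two-row case of Schensted's theorem to make the decomposition bijective. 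Either way the target identity $\binom{m}{k}^2-\binom{m}{k-1}\binom{m}{k+1}=N(m+1,k+1)$ is a one-line check.
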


The ideal $I_{m, k}$ has appeared previously in the literature on combinatorial commutative algebra and representation theory as the ideal of \emph{$3\times 3$ generalized permanents}, see \cite[Example 11.8.5]{BCRV}. 
Our motivation for Theorem~\ref{thm:degofex} comes from quantum chemistry. In that setting, the quotient $S_{m, k}$ is called the \emph{excitation ring}, a term we will use for $S_{m,k}$. Theorem~\ref{thm:degofex} yields an explicit description for the space of spin adapted electronic quantum states.

Our proof of Theorem~\ref{thm:degofex} is combinatorial.
In Section~\ref{sec:stdmono} we show that the defining generators of $I_{m, k}$ form a Gr\"obner basis. 
We thereby obtain a vector space basis consisting of the \emph{standard monomials} for $S_{m, k}$, indexed by certain nonnegative integer matrices. 
Section~\ref{sec:bij} enumerates this basis by constructing a bijection between the standard monomials and Dyck paths of length $2m + 2$ with exactly $k$ valleys, which are enumerated by the \emph{Narayana number} $N(m+1, k+1) = \frac{1}{m+1}\binom{m+1}{k+1}\binom{m+1}{k}$.
Theorem~\ref{thm:degofex} follows immediately from this enumeration.

In Section \ref{se:comtoquant}, we explain the relevance of the excitation ring $S_{m,k}$ in the study of spin adapted electronic systems, see \cite[Section~2]{HelgakerJorgensenOlsen} and \cite{FaulstichSverrisdottir}. We identify $S_{m,k}$ as the invariant ring of an $\operatorname{SU}(2)$-action and describe a non-commutative parameterization of $S_{m,k}$ in terms of \textit{excitation operators}, justifying our terminology. The proof of Theorem \ref{thm:degofex} provides an explicit model for the space of spin adapted quantum states via the combinatorics of plane partitions, enabling future study of this space using algebrao-geometric methods.

\section{\texorpdfstring{Standard monomials for $S_{m, k}$}{Standard monomials for the excitation ring}}\label{sec:stdmono}

The objective of this section is to describe a vector space basis for the excitation ring $S_{m, k}$. This is accomplished via the theory of \emph{Gr\"obner bases}, which algorithmically constructs a vector space basis of \emph{standard monomials} for any ideal quotient $\C[X]/I$, see \cite{Buchberger}. In what follows, we will index monomials in $\C[X]$ by their exponent vectors, written as $k\times (m-k)$ nonnegative integer matrices.

\begin{prop}\label{prop:grobner}
    The ideal generators (\ref{eq:gens}) of $I_{m,k} \subseteq \C[X]$ form a Gröbner basis with respect to the lexicographic monomial order.
\end{prop}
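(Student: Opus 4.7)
My plan is to apply Buchberger's criterion. The first step is to identify the leading term of each generator; the second is to verify that the S-polynomial of every pair reduces to zero modulo the generating set.

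For the leading terms, I claim that under the given lex order the leading term of $f_{p,q,r}^{a,b,c}$ (with $p \le q \le r$ and $a \le b \le c$) is the identity-permutation term $X_{p,a} X_{q,b} X_{r,c}$, appearing with a positive integer coefficient. To prove this, I compare the terms of $f_{p,q,r}^{a,b,c}$ starting with the variable $X_{p,\ast}$ of smallest row index $p$: a term maximizes its lex rank by assigning the smallest available column indices to the smallest-indexed rows. Since both $(p, q, r)$ and $(a, b, c)$ are already sorted, the identity pairing $(p,a), (q,b), (r,c)$ accomplishes this simultaneously at every row, producing the claimed leading term. (One can verify this against the underlined terms in the example preceding the proposition.)

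For the S-polynomial reductions, the product criterion eliminates all pairs $(f, g)$ whose leading monomials are coprime, so it suffices to consider pairs whose leading monomials share at least one variable (three shared variables force $f = g$, since a valid sorted multiset of pairs uniquely determines $(p,q,r)$ and $(a,b,c)$). I would organize the remaining pairs by the combinatorial type of overlap --- how many row indices and column indices coincide between the two leading multisets --- and in each case expand $S(f, g)$ as a signed sum of products (monomial)$\,\cdot\, f_{P}^{A}$. The key algebraic observation is that the $\mathfrak{S}_3$-symmetrization inherent in each generator distributes over monomial multiplication, so that products of a generator with a variable decompose as combinations of other generators, yielding the required cancellation to zero.

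The main obstacle is the bookkeeping across the several overlap types once row and column multiplicities are accounted for; the symmetry of the generators under the action of $\mathfrak{S}_k \times \mathfrak{S}_{m-k}$ on rows and columns cuts this down to a manageable list of essentially distinct cases. A conceptually cleaner alternative is to appeal to \cite[Example 11.8.5]{BCRV}, which establishes the Gr\"obner basis property for ideals of $3 \times 3$ generalized permanents; the present statement is a direct specialization.
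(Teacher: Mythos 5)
Your setup matches the paper's: Buchberger's criterion, the product criterion to dispose of coprime pairs, and the identification of the leading term of $f_{p,q,r}^{a,b,c}$ as the identity-permutation term $X_{p,a}X_{q,b}X_{r,c}$ (which is correct and consistent with the underlined terms in the example and with the description of leading terms used later in Corollary~\ref{cor:stdmono}). The gap is in the decisive step: you never actually show that the S-polynomials of the non-coprime pairs reduce to zero. The ``key algebraic observation'' --- that the $\mathfrak{S}_3$-symmetrization distributes over monomial multiplication so that variable-times-generator products ``decompose as combinations of other generators'' --- is asserted, not proved, and as stated it is only a heuristic: what Buchberger's criterion requires is a standard representation $S(f,g)=\sum h_i f_i$ with $\operatorname{lt}(h_i f_i)\leq\operatorname{lt}(S(f,g))$, and producing such representations is exactly the content you are deferring. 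Your proposed case analysis by overlap type is plausible but is left entirely unexecuted, so the proposal does not constitute a proof.

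Your fallback --- citing \cite[Example 11.8.5]{BCRV} --- does not close the gap either: the paper invokes that reference only for the prior appearance of $I_{m,k}$ as the ideal of $3\times 3$ generalized permanents, not for the Gr\"obner basis property, and you give no evidence that the reference establishes it. The paper closes the gap you leave open with a short but essential reduction: if two generators have non-coprime leading monomials, then together they involve at most $5$ row indices and $5$ column indices of $X$, so after relabeling they are generators of the single ideal $I_{10,5}$; a finite \texttt{Macaulay2} computation then verifies Buchberger's criterion for $I_{10,5}$, and hence for all $I_{m,k}$. If you want to avoid computer verification, you would need to actually enumerate your overlap types and exhibit the reductions to zero in each case; the $5\times 5$ localization trick is the cleanest way to see that this is a finite, bounded task independent of $m$ and $k$.
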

\begin{proof}
    By Buchberger's criterion \cite[Theorem 2.9.3]{CoxLittleOSheaSweedler}, it suffices to show that the $S$-polynomial of any pair $(f, g)$ of generators for $I_{m, k}$ reduces to $0$. 
    If the leading monomials of $f$ and $g$ are co-prime, then their $S$-polynomial reduces to $0$ by \cite[Proposition 2.9.4]{CoxLittleOSheaSweedler}. 
    If they are not co-prime, then $f$ and $g$ only use variables from a $5 \times 5$ submatrix of $X$. 
    Thus $f$ and $g$ are generators of $I_{10, 5}$, up to relabeling of the variables in $X$. 
    Direct computation with \texttt{Macaulay2} verifies that the generators of $I_{10, 5}$ form a Gr\"obner basis, completing the proof.
\end{proof}

\begin{definition}
    Let $M\in{\sf Mat}_{a, b}(\Z_{\geq 0})$ be an $a\times b$ nonnegative integer matrix. A \emph{weak diagonal} of $M$ is a sequence of nonzero entries of $M$ in matrix positions $(i_1,j_1),\dots,(i_r, j_r)$, such that $i_1\leq\dots\leq i_r$ and $j_1\leq\dots\leq j_r$. The \emph{width} of $M$, denoted ${\sf width}(M)$, is the maximum sum of entries $\sum M_{i, j}$ along a weak diagonal of $M$.
\end{definition}

\begin{corollary}\label{cor:stdmono}
    The standard monomial basis of $S_{m, k}$ is indexed by the set
    \[\{M\in{\sf Mat}_{k, m-k}(\Z_{\geq 0}) \,:\, {\sf width}(M)\leq 2\}.\]
\end{corollary}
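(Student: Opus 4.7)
The plan is to combine Proposition~\ref{prop:grobner} with an explicit identification of the leading monomial of each Gr\"obner basis element, and then translate the resulting divisibility criterion into the width condition on $M$.

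First, I would determine $\mathrm{init}(f^{a,b,c}_{p,q,r})$ for $p \le q \le r$ and $a \le b \le c$. Under the chosen lex order, $X_{i,j}$ is larger for smaller $i$, and for fixed $i$ larger for smaller $j$. Among the monomials $X_{p,\sigma(a)}X_{q,\sigma(b)}X_{r,\sigma(c)}$ indexed by the arrangements $\sigma$ of the multiset $\{a,b,c\}$, the lex-maximum is obtained by matching the smallest available column to the smallest available row, namely $\sigma = \mathrm{id}$. This yields $\mathrm{init}(f^{a,b,c}_{p,q,r}) = X_{p,a}X_{q,b}X_{r,c}$, consistent with the underlined leading terms in the $m=4$, $k=2$ example. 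The cases in which some of the indices $p,q,r$ or $a,b,c$ coincide are handled by a short direct comparison among the (at most six) distinct terms; in each such case the claimed monomial still comes out uniquely on top.

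Second, by Proposition~\ref{prop:grobner} and the standard theory of Gr\"obner bases, the standard monomials of $S_{m,k}$ are precisely those $X^M \in \C[X]$ that are not divisible by any leading term $X_{p,a}X_{q,b}X_{r,c}$ with $p \le q \le r$ and $a \le b \le c$.

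Third, I would translate this divisibility criterion into the width condition on $M$. A monomial $X_{p,a}X_{q,b}X_{r,c}$ with $p \le q \le r$ and $a \le b \le c$ divides $X^M$ if and only if one can select three (not necessarily distinct) positions from $M$, respecting the multiplicities $M_{i,j}$, that form a weakly increasing chain in both coordinates. Collapsing such a triple to its distinct positions yields a weak diagonal $S$ of $M$ with $\sum_{(i,j) \in S} M_{i,j} \ge 3$, and conversely every such weak diagonal produces a suitable divisor by choosing three positions from $S$ with total multiplicity three, each used at most $M_{i,j}$ times. Therefore $X^M$ is a standard monomial if and only if $\mathsf{width}(M) \le 2$. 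The only nontrivial step is the leading-term identification in the presence of repeated indices; everything else is a direct unwinding of definitions.
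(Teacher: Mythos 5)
Your proposal is correct and follows essentially the same route as the paper: invoke Proposition~\ref{prop:grobner}, identify the leading terms of the generators as the monomials $X_{p,a}X_{q,b}X_{r,c}$ with $p\le q\le r$ and $a\le b\le c$, and translate non-divisibility by these into the condition ${\sf width}(M)\le 2$. The paper phrases the last step via the column-index sequence of the lex-sorted monomial having no weakly increasing triple, which is exactly your weak-diagonal formulation.
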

\begin{proof}
    Proposition~\ref{prop:grobner} shows that the defining generators (\ref{eq:gens}) of $I_{m, k}$ form a Gr\"obner basis. By definition, the standard monomial basis of $S_{m,k}$ then consists of monomials not divisible by the leading terms of these generators \cite[Proposition 5.3.4]{CoxLittleOSheaSweedler}.
    Write monomials as products $X_{i_1, j_1} \cdots X_{i_r, j_r}$ in lexicographic order, i.e., so that $i_1 \le i_2 \le \dots \le i_r$ and $j_{p} \leq j_{p + 1}$ whenever $i_p = i_{p + 1}$. 
    The leading terms of (\ref{eq:gens}) are exactly the degree-$3$ monomials $X_{i_1, j_1}X_{i_2,j_2}X_{i_3,j_3}$ such that $j_1\leq j_2\leq j_3$. 
    Thus the standard monomial basis consists of monomials $X_{i_1, j_1} \cdots X_{i_r, j_r}$ such that the sequence $j_1,\dots,j_r$ contains no triple ascent, i.e., there are no indices $a<b<c$ with $j_a\leq j_b\leq j_c$. 
    The exponent vectors of these monomials are exactly the $k\times(m-k)$ nonnegative integer matrices of width at most $2$, as desired.
\end{proof}

\begin{example}
    Let $m = 4$ and $k = 2$. The ring $S_{4,2}$ has $20$ standard monomials. Since the ideal generators are of degree three, we can see that all $15$ monomials of degree at most two in $\C[X]$ are standard monomials for $S_{4, 2}$. Of these, $10 = \binom{4 +1}{2}$ are monomials of degree two, while the others are the four variables $X_{i,j}$ and the constant $1$. The $10$ monomials of degree two correspond to the following matrices in ${\sf Mat}_{2, 2}(\Z_{\geq 0})$:
    $$
    \begin{pmatrix}
        2 & 0 \\
        0 & 0
    \end{pmatrix}, \, \begin{pmatrix}
        0 & 2\\
        0 & 0
    \end{pmatrix}, \, \begin{pmatrix}
        0 & 0\\
        2 & 0
    \end{pmatrix}, \, \begin{pmatrix}
        0 & 0 \\
        0 & 2
    \end{pmatrix}, \, \begin{pmatrix}
        1 & 1 \\
        0 & 0
    \end{pmatrix}, \, \begin{pmatrix}
        1 & 0\\
        1 & 0
    \end{pmatrix}, \, \begin{pmatrix}
        1 & 0\\
        0 & 1
    \end{pmatrix}, \, \begin{pmatrix}
        0 & 1 \\
        1 & 0
    \end{pmatrix}, \, \begin{pmatrix}
        0 & 1\\
        0 & 1
    \end{pmatrix}, \, \begin{pmatrix}
        0 & 0\\
        1 & 1
    \end{pmatrix}.
    $$
    There are four standard monomials of degree three: $X_{1,1}X_{1,2}X_{2,1}$, $X_{1,2}^2X_{2,1}$, $X_{1,2}X_{2,1}^2$ and $X_{1,2}X_{2,1}X_{2,2}$. They correspond respectively to the matrices 
    $$
    \begin{pmatrix}
        1 & 1\\
        1 & 0 
    \end{pmatrix}, \,\,\, \begin{pmatrix}
        0 & 2\\
        1 & 0
    \end{pmatrix}, \,\,\, \begin{pmatrix}
        0 & 1\\
        2 & 0
    \end{pmatrix}, \,\,\,
     \begin{pmatrix}
        0 & 1\\
        1 & 1
    \end{pmatrix}.
    $$
    The only standard monomial of degree four is $X_{1,2}^2 X_{2,1}^2$, which corresponds to
    $$
    \begin{pmatrix}
        0 & 2\\
        2 & 0
    \end{pmatrix}.
    $$
\end{example}

In order to prove Theorem~\ref{thm:degofex}, it remains to enumerate our standard monomial basis for $S_{m, k}$. We carry this out combinatorially in the next section.

\section{RSK and plane partitions}\label{sec:bij}
To enumerate the basis for $S_{m, k}$ from Corollary~\ref{cor:stdmono}, we map the standard monomials to a well-studied set of \emph{plane partitions} by applying a version of the \emph{Robinson--Schensted--Knuth correspondence} (RSK). We briefly recount the necessary notation and results.

\begin{definition}
    A \emph{semistandard Young tableau} of partition \emph{shape} $\lambda$ and \emph{content} $n$ is a filling of the Young diagram of $\lambda$ with numbers from $[n]:=\{1,\dots,n\}$ (with repetitions allowed), such that the entries are weakly increasing from left to right along rows and strictly increasing down columns from top to bottom. The set of all semistandard Young tableaux with shape $\lambda$ and content $n$ is denoted ${\sf SSYT}(\lambda, n)$. 
\end{definition}

\begin{example}
    Let $\lambda = (4,2,1)$. The filling $$\begin{ytableau}
        1 & 1 & 2 & 4\\
        2 & 3 \\
        5
    \end{ytableau}$$ is a semistandard Young tableau of shape $\lambda$ and content $c$ for any $c\geq 5$.
\end{example}

\begin{definition}
    The \emph{conjugate} of a partition $\lambda$, denoted $\lambda'$, is the partition whose Young diagram is obtained by transposing the rows and columns of the Young diagram of $\lambda$. The \emph{length} $\ell(\lambda)$ of $\lambda$ is the number of rows in its Young diagram.
\end{definition}

The \emph{Robinson--Schensted--Knuth correspondence} is a classical bijection between nonnegative integer matrices and pairs of semistandard Young tableaux of the same shape.

\begin{theorem}[{RSK, \cite[Theorem 7.11.5]{ECII}}, {\cite[Theorem 1 and Section II]{Schensted}}]\label{thm:RSK}
    For all nonnegative integers $a, b$ and $c$ there exists a bijection
    \[\{M\in{\sf Mat}_{a, b}(\Z_{\geq 0}) \,:\, {\sf width}(M)\leq c\}\xleftrightarrow{RSK}\bigsqcup_{\ell(\lambda')\leq c}\left({\sf SSYT}(\lambda, a)\times {\sf SSYT}(\lambda, b)\right).\]
\end{theorem}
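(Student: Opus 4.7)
The statement is the matrix form of the Robinson--Schensted--Knuth correspondence, enriched with the Greene/Schensted property that translates ${\sf width}(M)$ into the first part of $\lambda$. I plan to prove it in three steps.

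First, I would construct the RSK map on matrices via the biword formalism. Given $M \in {\sf Mat}_{a,b}(\Z_{\geq 0})$, one forms the two-line array $w$ whose columns list each pair $(i,j)$ with multiplicity $M_{ij}$, ordered lexicographically (first by the top entry, then by the bottom entry). Applying Schensted row insertion to the bottom row of $w$ produces an insertion tableau $P$, while the top row entries, recorded in the order that new boxes appear, yield a recording tableau $Q$ of the same shape $\lambda$. Checking that the bumping rule respects semistandardness gives $P \in {\sf SSYT}(\lambda, b)$ and $Q \in {\sf SSYT}(\lambda, a)$, and reversibility of bumping gives the inverse map; this establishes the bijection between ${\sf Mat}_{a,b}(\Z_{\geq 0})$ and $\bigsqcup_\lambda ({\sf SSYT}(\lambda, b) \times {\sf SSYT}(\lambda, a))$. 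A transpose of conventions yields the version in the statement.

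The second and main step is Schensted's theorem: $\lambda_1$ equals the length of the longest weakly increasing subsequence of $w$, meaning the longest sequence of columns $(i_1,j_1),\dots,(i_r,j_r)$ of $w$ (in the order they appear) with $j_1 \leq \cdots \leq j_r$. I would prove this by induction on the length of $w$ using the bumping rule: at each insertion, the first row grows by exactly one box when the inserted entry weakly exceeds all current entries of row one, which is precisely the condition for extending a longest weakly increasing subsequence; otherwise the new entry bumps a strictly larger one and $\lambda_1$ is unchanged. Translating back to the matrix, the distinct positions used by a weakly increasing subsequence of $w$ form a weak diagonal $(i_1,j_1),\dots,(i_r,j_r)$ of $M$, and each position $(i,j)$ can be traversed up to $M_{ij}$ times. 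Hence the longest weakly increasing subsequence has length $\max \sum_t M_{i_t, j_t}$ taken over weak diagonals, which is exactly ${\sf width}(M)$.

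Finally, since $\ell(\lambda') = \lambda_1$, the condition ${\sf width}(M) \leq c$ is equivalent to $\ell(\lambda') \leq c$, and restricting RSK to matrices of width at most $c$ yields the stated bijection. The hard part will be the Schensted step in the weakly increasing setting: the tie-breaking in the bumping rule must be handled carefully so that the induction works for biwords with repeated entries rather than merely for permutations. The other steps are bookkeeping once the bumping formalism is in place.
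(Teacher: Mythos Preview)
The paper does not prove this theorem at all; it is stated as a classical result with citations to Stanley's \emph{Enumerative Combinatorics} and Schensted's original paper, and is used as a black box. Your outline is the standard proof one finds in those references: the biword/row-insertion construction of RSK together with Schensted's theorem identifying $\lambda_1$ with the longest weakly increasing subsequence, which translates to ${\sf width}(M)$ under the matrix encoding. So your approach is correct and in fact supplies what the paper omits.

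One small caution on your Step~2 sketch: the sentence ``the first row grows by exactly one box when the inserted entry weakly exceeds all current entries of row one, which is precisely the condition for extending a longest weakly increasing subsequence'' is not, by itself, the inductive invariant you need. The clean way to run the induction is to show that after each insertion the $k$th entry of row~1 equals the minimum possible last element among all weakly increasing subsequences of length $k$ in the word inserted so far; from this $\lambda_1$ equals the longest such length. You already flag the tie-breaking for repeated entries as the delicate point, and that is exactly where this stronger invariant earns its keep.
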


Theorem~\ref{thm:RSK} is the usual statement of the RSK bijection. However, the correspondence can be recast as a bijection between nonnegative integer matrices and \emph{plane partitions}. This is the interpretation we use for our enumeration. We follow Hopkins' exposition in \cite{Hopkins} to state this alternative version of RSK.

\begin{definition}
    An $a\times b\times c$ \emph{plane partition} is an $a\times b$ nonnegative integer matrix whose entries are bounded above by $c$ and are weakly decreasing along rows and columns. The set of all such objects is denoted by $\mathcal{B}(a, b, c)$.
\end{definition}
\begin{example}
    The matrix $\left[\begin{smallmatrix}
        5 & 4 & 3 & 1\\
        3 & 2 & 2 & 1\\
        3 & 2 & 1 & 0\\
        2 & 1 & 1 & 0
    \end{smallmatrix}\right]$ is a $4\times 4\times c$ plane partition for any $c\geq 5$.
\end{example}

\begin{theorem}[{\cite[pg. 3]{Hopkins}}]\label{thm:hopkinsRSK}
    For all $a, b, c\in\Z_{\geq 0}$, there is a bijection 
    \[\bigsqcup_{\ell(\lambda')\leq c}\left(SSYT(\lambda, a)\times SSYT(\lambda, b)\right)\leftrightarrow\mathcal{B}(a, b, c).\] 
\end{theorem}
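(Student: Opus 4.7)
The plan is to construct an explicit bijection via the Gelfand--Tsetlin reformulation of semistandard tableaux. A tableau $P\in SSYT(\lambda,a)$ is equivalent to a chain of partitions $\emptyset=\lambda^{(0)}\subseteq\lambda^{(1)}\subseteq\cdots\subseteq\lambda^{(a)}=\lambda$, where $\lambda^{(k)}$ is the shape of the subtableau of entries $\le k$. Similarly, $Q\in SSYT(\lambda,b)$ yields a chain $\emptyset=\mu^{(0)}\subseteq\cdots\subseteq\mu^{(b)}=\lambda$. Under the hypothesis $\ell(\lambda')\le c$, i.e.\ $\lambda_1\le c$, every partition appearing in either chain has at most $c$ parts.

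Given such a pair of chains, I would assemble an $a\times b$ plane partition $\pi\in\mathcal{B}(a,b,c)$ via a Fomin-style local growth rule, defining $\pi_{i,j}$ from $\lambda^{(i)}$ and $\mu^{(j)}$ so that $\pi$ is weakly decreasing along rows and columns with every entry bounded by $\lambda_1\le c$. For the inverse, a plane partition $\pi\in\mathcal{B}(a,b,c)$ would be sliced row-by-row and column-by-column into two chains of partitions, reconstructing the Gelfand--Tsetlin patterns and hence the pair $(P,Q)$ after inverting the above reformulation. The common shape $\lambda$ of $P$ and $Q$ emerges as the top partition of both chains.

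The main obstacle is writing down the growth rule explicitly and verifying it is a bijection. I expect this to reduce to a cell-by-cell local check in the spirit of the growth-diagram proof of classical RSK: each $2\times 2$ subgrid of the partition-valued array must satisfy a precise local condition that ensures the map commutes with the row- and column-slicing operations in both directions. Alternatively, one could sidestep a direct construction by combining Theorem~\ref{thm:RSK} with a separately-constructed bijection between $\{M\in{\sf Mat}_{a,b}(\Z_{\ge 0}): {\sf width}(M)\le c\}$ and $\mathcal{B}(a,b,c)$, but this simply relocates the same difficulty to the width-to-height translation for plane partitions.
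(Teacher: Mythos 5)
Your starting point coincides with the paper's: both recast $P$ and $Q$ as chains of nested partitions (Gelfand--Tsetlin patterns) with common top shape $\lambda$. But the entire content of the proof --- the map itself --- is exactly the piece you defer (``the main obstacle is writing down the growth rule explicitly''), so as written there is a genuine gap. Moreover, the mechanism you gesture at is not the right one for this step: no Fomin-style growth rule or $2\times 2$ local check is needed, since growth diagrams are the machinery behind Theorem~\ref{thm:RSK}, which is taken as given; Theorem~\ref{thm:hopkinsRSK} only requires repackaging a pair of tableaux as a plane partition. The paper does this by gluing the two Gelfand--Tsetlin patterns along the main diagonal of an $a\times b$ array: writing $P^t$ for the shape of the subtableau of entries $\le t$ in $P$ (and similarly $Q^t$), one sets $B_{ij}=P^{a-i+j}_j$ for $i\geq j$ and $B_{ij}=Q^{b-j+i}_i$ for $i\leq j$, consistently on the diagonal since $P^a=Q^b=\lambda$. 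Note that each entry of $B$ is a single part of a single shape from one of the two chains; it is not a local function of the pair $(\lambda^{(i)},\mu^{(j)})$ as your sketch suggests. Monotonicity of $B$ along rows and columns then follows from the nesting of the chains (and the horizontal-strip condition between consecutive shapes), and the bound $B_{ij}\le c$ follows from $\lambda_1=\ell(\lambda')\le c$.

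One small correction along the way: from $\ell(\lambda')\le c$, i.e.\ $\lambda_1\le c$, you conclude that every partition in the chains ``has at most $c$ parts.'' What you actually get --- and what you need, since the entries of the plane partition are parts of these partitions --- is that every partition in the chains has \emph{largest part} at most $c$; ``at most $c$ parts'' would correspond to $\ell(\lambda)\le c$, which is the condition controlling the number of rows $a$ (resp.\ $b$), not the entry bound $c$.
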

\begin{proof}
    First, observe that the data of a tableau $P\in{\sf SSYT}(\lambda, n)$ is equivalent to that of a chain of nested partition shapes $P^1\subseteq P^2\subseteq\dots\subseteq P^n = P$, where $P^i$ is the shape of the sub-tableau of $P$ consisting of only those boxes filled with numbers $\leq i$. Writing each partition $P^i$ as a non-increasing list of nonnegative integers $P^i_1\geq \dots\geq P^i_i\geq 0$ yields a triangular array of integers called the \emph{Gelfand--Tsetlin pattern} associated to $P$. 

    Given a pair $(P, Q)\in {\sf SSYT}(\lambda, a)\times {\sf SSYT}(\lambda, b)$, construct an $a\times b$ plane partition $B = [B_{ij}]$ as follows:
    \[B_{ij} = \begin{cases}
        P^{a-i+j}_j & \text{if } i\geq j,\\
        Q^{b-j+i}_i & \text{if } i\leq j.
    \end{cases}\]
    One can verify directly that $B$ is in fact a plane partition, and that every $a\times b$ plane partition can be constructed in such a fashion. Moreover, from the construction it follows that $B\in\mathcal{B}(a, b, c)$ if and only if $\ell(\lambda')\leq c$, completing the proof.
\end{proof}

The following example illustrates the bijection between pairs of semistandard Young tableaux and plane partitions described in the proof of Theorem \ref{thm:hopkinsRSK}.

\ytableausetup{smalltableaux,centertableaux}
\begin{example}
    Consider the pair of tableaux $(P,Q) = \left(\begin{ytableau}
        1 & 1 & 2 & 4 & 4\\
        2 & 3\\
        3
    \end{ytableau}, \begin{ytableau}
        1 & 2 & 2 & 3 & 4\\
        2 & 3\\
        4
    \end{ytableau}\right),$ both with entries in $\{1,2,3,4\}$. The $P$ tableau corresponds to the nested chain of partition shapes 
    \[
    \ydiagram{2}\subseteq \ydiagram{3,1} \subseteq \ydiagram{3,2,1}\subseteq \ydiagram{5,2,1}.
    \] The $Q$ tableau corresponds to the chain
    \[
    \ydiagram{1}\subseteq \ydiagram{3,1} \subseteq \ydiagram{4,2}\subseteq \ydiagram{5,2,1}.
    \] The corresponding plane partition is $\left[\begin{smallmatrix}
        5 & 4 & 3 & 1\\
        3 & 2 & 2 & 1\\
        3 & 2 & 1 & 0\\
        2 & 1 & 1 & 0
    \end{smallmatrix}\right]$. The largest entry of this plane partition is $5$ precisely because $P$ and $Q$ have $5$ columns.
\end{example}

\begin{remark}
    There is a direct bijection between nonnegative integer matrices and plane partitions which does not require first passing through semistandard Young tableaux. For details, we direct the reader to Hopkins' exposition in~\cite{Hopkins}.
\end{remark}

The following enumeration for $\mathcal{B}(a, b, c)$ is classical, deriving from MacMahon's generating series for plane partitions \cite{MacMahon}. See, e.g., \cite[Equation (7.109)]{ECII} for a modern statement:
\begin{equation}\label{eqn:PP}
    |\mathcal{B}(a, b, c)| = \prod_{i=1}^c\frac{\binom{a+b+i-1}{a+i-1}\binom{a+b+i-1}{b+i-1}}{\binom{a+b+i-1}{a}\binom{b+i-1}{b}}.
\end{equation}
This formula suffices to prove Theorem~\ref{thm:degofex}.

\begin{proof}[Proof of Theorem~\ref{thm:degofex}]
    We wish to show that the excitation ring $S_{m, k}$ has vector space dimension $N(m+1, k+1) = \frac{1}{m+1}\binom{m+1}{k+1}\binom{m+1}{k}$. By Corollary~\ref{cor:stdmono}, it suffices to show that
    \begin{equation}\label{eqn:enum}
        |\{M\in{\sf Mat}_{k, m-k}(\Z_{\geq 0}) \,:\, {\sf width}(M)\leq 2\}| = N(m+1, k+1).
    \end{equation}
    Chaining together the bijections from Theorems~\ref{thm:RSK} and ~\ref{thm:hopkinsRSK} shows that 
    \[|\{M\in{\sf Mat}_{k, m-k}(\Z_{\geq 0}) \,:\, {\sf width}(M)\leq 2\}| = |\mathcal{B}(k,m-k,2)|.\]
    The enumeration in \eqref{eqn:PP} then proves that
$|\mathcal{B}(k,m-k,2)| = N(m+1,k+1)$, as desired.
\end{proof}

The remainder of this section presents another bijection, combinatorially explaining the appearance of the Narayana numbers $N(m+1, k+1)$ in Theorem~\ref{thm:degofex}. 
We recall the standard combinatorial interpretation for these numbers.

\begin{definition}
    A \emph{Dyck word} of length $2n$ is a string $w$ of $n$ ``$u$'' symbols and $n$ ``$d$'' symbols that is \emph{ballot}, meaning that every initial substring of $w$ contains at least as many $u$'s as $d$'s. A \emph{valley} in $w$ is a $du$-substring. Let $\mathcal{D}(n, r)$ denote the set of Dyck words of length $2n$ with exactly $r-1$ valleys. 
\end{definition}

The term ``valley'' originates from the visual interpretation of Dyck words as walks in the first quadrant from $(0, 0)$ to $(2n, 0)$, consisting of ``up'' moves $(i, j)\to(i+1, j+1)$ and ``down'' moves $(i, j)\to(i+1, j-1)$. 
See Figure~\ref{fig:dyckpathnoblue} for an example.

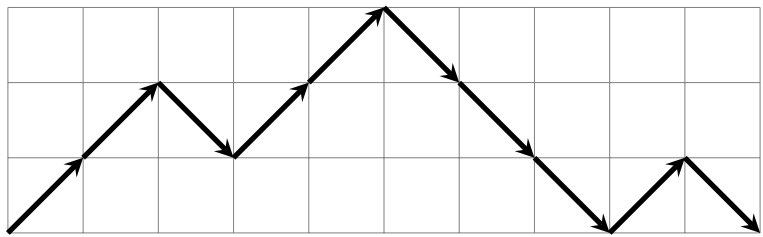
\begin{figure}[h!]
    \centering
    \begin{tikzpicture}
        \dyckpath{0,0}{10}{3}{0,0,1,0,0,1,1,1,0,1}
    \end{tikzpicture}
    \caption{The element $w = uuduudddud$ of $\mathcal{D}(5,3)$.}
    \label{fig:dyckpathnoblue}
\end{figure}

The Narayana number $N(n, r)$ classically arises as the cardinality of $\mathcal{D}(n, r)$, see e.g.\ \cite[Section 2.4.2]{Petersen}.
They define a refinement of the Catalan numbers $C_n$, which count the number of Dyck words of length $2n$ with any number of valleys. 
By definition,
\[\sum_{r = 1}^n N(n,r) = \frac{1}{n + 1}\binom{2n}{n} = C_{n}.\]

In order to give an explicit bijection between $\mathcal{B}(k, m-k, 2)$ and $\mathcal{D}(m+1, k+1)$, we introduce some additional terminology. 
A letter in a Dyck word $w$ is \emph{central} if it is not the first or last letter of $w$, and it is not part of a valley of $w$. 
Let ${\sf up}_i(w)$ and ${\sf down}_i(w)$ respectively denote the number of central $u$'s and $d$'s occurring before the $i$th valley of $w$. 
If $i$ exceeds the number of valleys in $w$, ${\sf up}_i(w)$ and ${\sf down}_i(w)$ are defined to count the total number of central $u$'s and $d$'s in $w$.

\begin{theorem}\label{thm:main}
    The map $\mathcal{D}(m+1, k+1)\to\mathcal{B}(k, m-k,2)$ sending $w\mapsto B^w$, where
    \[B^w_{ij} = \begin{cases}
        2 & \text{if } {\sf down}_{k+1-i}(w)\geq j,\\
        1 & \text{if } {\sf down}_{k+1-i}(w)< j\text{ and } {\sf up}_{k+1-i}(w)\geq j,\\
        0 & \text{if } {\sf up}_{k+1-i}(w)< j,
    \end{cases}\]
    is a well-defined bijection.
\end{theorem}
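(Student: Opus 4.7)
My plan is to verify well-definedness and then construct an explicit inverse by exploiting the run-length decomposition of a Dyck word. First, I check that $B^w$ lies in $\mathcal{B}(k, m-k, 2)$: each row $i$ has the form $(2^{d_i}, 1^{u_i - d_i}, 0^{m-k-u_i})$ where $d_i := {\sf down}_{k+1-i}(w)$ and $u_i := {\sf up}_{k+1-i}(w)$, so rows are weakly decreasing by construction. Columns are weakly decreasing because ${\sf up}_j(w)$ and ${\sf down}_j(w)$ are weakly increasing in $j$ while the row index is reversed. For each row to be well-formed I need $0 \leq d_i \leq u_i \leq m-k$: the inequality $d_i \leq u_i$ follows from the ballot property at the valley, while $u_i \leq m-k$ is a count — a word in $\mathcal{D}(m+1, k+1)$ has $m+1$ total $u$'s, of which the initial letter and the $u$-halves of the $k$ valleys are non-central, leaving $m-k$ central ones.

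Second, I construct the inverse. Given $B \in \mathcal{B}(k, m-k, 2)$, let $u'_i$ and $d'_i$ denote the numbers of nonzero entries and $2$'s in row $i$. The plane partition conditions force each row to equal $(2^{d'_i}, 1^{u'_i - d'_i}, 0^{m-k-u'_i})$, with $m-k \geq u'_1 \geq \cdots \geq u'_k \geq 0$, $d'_1 \geq \cdots \geq d'_k \geq 0$, and $d'_i \leq u'_i$. Set $\alpha_j := u'_{k+1-j}$ and $\beta_j := d'_{k+1-j}$ for $1 \leq j \leq k$, together with $\alpha_{k+1} := \beta_{k+1} := m-k$, and assemble the word
\[w := u^{a_1} d^{b_1} u^{a_2} d^{b_2} \cdots u^{a_{k+1}} d^{b_{k+1}}, \qquad a_j := \alpha_j - \alpha_{j-1} + 1, \ b_j := \beta_j - \beta_{j-1} + 1\]
(with $\alpha_0 = \beta_0 = 0$). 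Monotonicity of $\alpha$ and $\beta$ makes $a_j, b_j \geq 1$; the equalities $\sum a_j = \sum b_j = m+1$ give the correct length; the $k+1$ up-runs create exactly $k$ valleys; and $\alpha_j \geq \beta_j$ is the required ballot inequality, which also ensures partial sums stay nonnegative within each $d$-run.

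Third, I verify the two maps are mutually inverse by a direct run-length computation. For the word $w$ above, each $u$-run $u^{a_\ell}$ contributes exactly $a_\ell - 1$ central $u$'s (the remaining $u$ being either the initial letter of $w$ or the $u$-half of the preceding valley), so ${\sf up}_j(w) = \sum_{\ell \leq j}(a_\ell - 1) = \alpha_j$; an analogous count for $d$'s yields ${\sf down}_j(w) = \beta_j$. Translating back through $i = k+1-j$ then recovers $B$. The main obstacle will be keeping the bookkeeping straight: the index reversal $i \leftrightarrow k+1-i$, the off-by-one shifts relating cumulative run-lengths to cumulative central counts, and the precise determination of which letters are non-central (initial, final, valley-halves) must all be threaded through consistently. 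As a sanity check, both sides have cardinality $N(m+1,k+1)$ (by the proof of Theorem~\ref{thm:degofex} and the definition of $\mathcal{D}(m+1,k+1)$), so once injectivity is established the bijection follows even without explicitly confirming surjectivity.
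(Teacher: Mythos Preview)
Your proposal is correct and follows essentially the same approach as the paper: both arguments encode a Dyck word in $\mathcal{D}(m+1,k+1)$ by the weakly increasing sequences $({\sf up}_j,{\sf down}_j)_{j=1}^k$ satisfying ${\sf down}_j\le{\sf up}_j\le m-k$, encode a plane partition in $\mathcal{B}(k,m-k,2)$ by the row profiles $(2^{d_i},1^{u_i-d_i},0^{m-k-u_i})$, and observe that these two encodings carry the same data up to the index reversal $i\leftrightarrow k+1-i$. Your explicit run-length formulas $a_j=\alpha_j-\alpha_{j-1}+1$, $b_j=\beta_j-\beta_{j-1}+1$ are a slightly more concrete packaging of what the paper writes as the central-letter decomposition~\eqref{eqn:dyckform}, but the substance is identical.
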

\begin{proof}
    We begin by observing that a Dyck word $w\in\mathcal{D}(m+1,k+1)$ is uniquely determined by the sequence $({\sf up}_i(w), {\sf down}_i(w))_{i=1}^k$. By construction, a Dyck word $w\in\mathcal{D}(m+1,k+1)$ is determined by its sequence of central letters, which is of the following form:
    \begin{equation}\label{eqn:dyckform}
        \overbrace{u\dots u}^{{\sf up}_1(w)}\underbrace{d\dots d}_{{\sf down}_1(w)}\hspace{-0.5cm}\overbrace{u\dots u}^{{\sf up}_2(w)-{\sf up}_1(w)}\hspace{-1.2cm}\underbrace{d\dots d}_{{\sf down}_2(w)-{\sf down}_1(w)}\hspace{-0.5cm}\cdots\hspace{-0.5cm}\overbrace{u\dots u}^{{\sf up}_{k+1}(w)-{\sf up}_k(w)}\hspace{-1.7cm}\underbrace{d\dots d}_{{\sf down}_{k+1}(w)-{\sf down}_k(w)}\hspace{-1cm}.
    \end{equation}
    We see immediately that $({\sf up}_i)_{i=1}^{k+1}$ and $({\sf down}_i)_{i=1}^{k+1}$ are weakly increasing sequences, and the ballot condition on Dyck words implies that ${\sf up}_i(w)\geq{\sf down}_i(w)$ for all $i$. 
    Moreover, since $w$ has $2m+2$ total letters and $k$ valleys, the number of central letters in $w$ is $(2m+2)-2k-2 = 2(m-k)$. 
    Thus ${\sf up}_{k+1}(w) = {\sf down}_{k+1}(w) = m-k$. 
    Conversely, given any pair of weakly increasing sequences $(a_i)_{i=1}^k$ and $(b_i)_{i=1}^k$ satisfying $a_i\geq b_i$ for all $i$ and $m-k\geq a_k$, we may set $a_{k+1} = b_{k+1} = m-k$ and construct a word of the form \eqref{eqn:dyckform}. 
    Inserting an initial $u$, final $d$, and valleys produces a Dyck word $w\in\mathcal{D}(m+1,k+1)$ such that $a_i = {\sf up}_i(w)$ and $b_i = {\sf down}_i(w)$.

    Now, it follows directly from the definitions that for any Dyck word $w\in\mathcal{D}(m+1,k+1)$, the associated $B^w$ is in fact a plane partition in $\mathcal{B}(k,m-k,2)$.
    To prove the theorem, it suffices to construct an inverse map by extracting weakly-increasing sequences $(a_i^B)_{i=1}^k$ and $(b_i^B)_{i=1}^k$ from each $B\in\mathcal{B}(k,m-k,2)$ such that $a^B_i\geq b^B_i$ for all $i$ and $m-k\geq a_k^B$. 
    For each $i\in[k]$, let $c_i$ be the largest integer such that $B_{i, c_i} = 2$ (unless $B_{i, 1}<2$, in which case set $c_i = 0$) and let $c'_i$ be the smallest integer such that $B_{i, c'_i+1} = 0$ (or, if $B_{i, m-k}>0$, set $c'_i = m-k$). 
    Then define $a^B_i = c'_{k+1-i}$ and $b^B_i = c_{k+1-i}$. 
    It is straightforward to verify that the sequences $(a^B_i)$ and $(b^B_i)$ satisfy the desired conditions, and that $B\mapsto (a_i^B,b_i^B)_{i=1}^k$ is in fact the inverse of the map $({\sf up}_i(w), {\sf down}_i(w))_{i=1}^k\mapsto B^w$. 
    This completes the proof.
\end{proof}

The following example demonstrates the bijection of Theorem \ref{thm:main}. 

\begin{example}
    Let $m = 4$ and $k = 2$, and consider the Dyck word $w = uuduudddud$ illustrated in Figure~\ref{fig:dyckpath} (central paths are highlighted in blue). 
    We can compute that 
    \[{\sf up}_1(w) = 1, \; {\sf up}_2(w) = 2, \; {\sf down_1}(w) = 0, \; {\sf down}_2(w) = 2.\] The corresponding plane partition $B^w$ is $\left[\begin{smallmatrix}
        2 & 2\\
        1 & 0
    \end{smallmatrix}\right]$. For instance, $B^w_{21} = 1$ since ${\sf down}_{3-2}(w) = {\sf down}_1(w) = 0 < 1$ and ${\sf up}_{3-2}(w) = {\sf up}_1(w) = 1\geq 1$.

    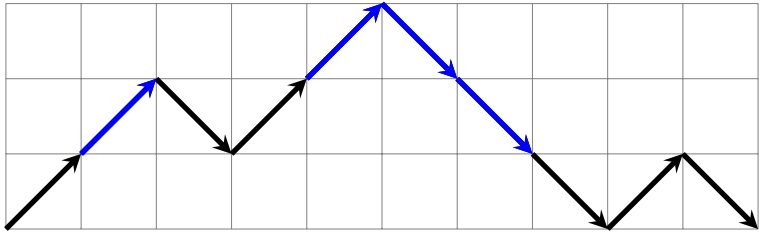
\begin{figure}[h!]
    \centering
    \begin{tikzpicture}
        \dyckpath{0,0}{10}{3}{0,0,1,0,0,1,1,1,0,1}
        \draw[line width=2pt,-stealth,blue] (1,1) -- (2,2);
        \draw[line width=2pt,-stealth,blue] (4,2) -- (5,3);
        \draw[line width=2pt,-stealth,blue] (5,3) -- (6,2);
        \draw[line width=2pt,-stealth,blue] (6,2) -- (7,1);
    \end{tikzpicture}
    \caption{The central paths of $w = u\mathbf{\textcolor{blue}{u}}du\mathbf{\textcolor{blue}{udd}}dud\in\mathcal{D}(5, 3)$ in blue.}
    \label{fig:dyckpath}
\end{figure}
\end{example}

\section{From Commutative Algebra to Quantum Chemistry}
\label{se:comtoquant}
In this section, we explain the significance of Theorem~\ref{thm:degofex} in quantum chemistry.
Consider an electronic system with $m$ spatial orbitals. 
We model this system algebraically as the vector space $\mathcal{Z}_m := \C^m\otimes \C^2$, with explicit basis vectors
\[e_{i} \otimes e_{\downarrow}, \quad e_{i} \otimes e_{\uparrow}, \quad 1 \le i \le m.\]
The element $e_{i\alpha} := e_i\otimes e_\alpha$ represents a spin orbital in position $i$ with spin $\alpha\in\{\downarrow, \uparrow\}$.
We assume our electronic system has $d$ electrons (later, we will assume the electrons come in $k$ pairs and set $d = 2k$). 
The space of \emph{quantum states} for our electronic system is defined as the exterior power $\wedge_{d} \mathcal{Z}_m$ and denoted by $\mathcal{H}_{m, d}$.
A vector space basis for $\mathcal{H}_{m, d}$ is given by the $\binom{2m}{d}$ exterior products of $d$ distinct basis elements $e_{i\alpha}$. 
In \cite{FaulstichSturmfelsSverrisdottir, Sverrisdottir} Faustich, Sverrisd\'ottir and Sturmfels use this model to develop the algebraic geometry for electronic structure theory over $\mathcal{H}_{m, d}$. 
However, most physical states satisfy an additional \emph{spin symmetry}: they are invariant under an $\operatorname{SU}(2)$-action, revealing deeper algebraic structures.

\begin{remark}[Lie algebra actions]\label{rem:lie}
    Let $G$ be a connected and simply connected Lie group and $\mathfrak{g}$ be its corresponding Lie algebra.
    The derivative of a $G$-action $\Psi:G\to GL(V)$ on a vector space $V$ is a $\mathfrak{g}$-action $\Psi': \mathfrak{g} \mapsto \operatorname{End}(V)$ on $V$. 
    Derivatives preserve the algebraic properties of the $G$-action -- the $\mathfrak{g}$-representations and $G$-representations are in one-to-one correspondence, see \cite[Section~8.1]{FultonHarris}.
    In particular (and most importantly for our purposes), their trivial representations agree, i.e., the subspace of $V$ invariant under the $G$-action is equal to the subspace of $V$ that vanishes under the $\mathfrak{g}$-action.
    Since Lie algebra actions are simpler than Lie group actions (for example, the Lie algebra $\mathfrak{g}$ does not preserve the topological properties of $G$), we use the Lie algebra action to study the invariant space $V^G$. 
\end{remark}

By Remark~\ref{rem:lie}, we may describe the $\operatorname{SU}(2)$-invariant space of $\mathcal{H}_{m,d}$ by finding the vanishing space of $\mathcal{H}_{m,d}$ under the corresponding Lie algebra action.

\begin{definition}
    The \emph{special unitary algebra} is the Lie algebra of $\operatorname{SU}(2)$.
    It is defined as
\[
\mathfrak{su}(2)=\{A\in\mathbb{C}^{2\times 2}:\ A^\ast=-A,\ {\rm tr}(A)=0\}.
\]
Its complexification $\C \otimes \mathfrak{su}(2)$ is isomorphic to the \textit{special linear algebra} $\mathfrak{sl}_2(\C)$, which is
generated by the following $2 \times 2$ matrices:
\[
S_+ = \begin{pmatrix}
    0 & 1\\
    0 & 0\\
\end{pmatrix}, \quad S_- = \begin{pmatrix}
    0 & 0\\
    1 & 0\\
\end{pmatrix}, \quad S_z = \begin{pmatrix}
    1 & 0 \\
    0 & - 1\\
\end{pmatrix}.
\]
We henceforth write $\mathfrak{sl}_2(\C)$ for the complexification of the special unitary algebra.
\end{definition}

The $\mathfrak{sl}_2(\C)$-action on $\mathcal{H}_{m,d}$ is canonically obtained from an $\mathfrak{sl}_2(\C)$-action on the \textit{spin space} $\C^2$. We explicitly write the basis vectors of the spin space as $e_{\downarrow} = -e_2$ and $e_{\uparrow} = e_1$, where $e_1$ and $e_2$ form the standard basis of $\C^2$. The $\mathfrak{sl}_2(\C)$-action on $\C^2$ is then defined by left multiplication; that is, for $g \in \mathfrak{sl}_2(\C)$ and $v\in\C^2$ the action of $g$ is the map $v \mapsto gv$.
We can finally define the $\mathfrak{sl}_2(\C)$-action on $\mathcal{H}_{m,d}$ by describing the action of $g \in \mathfrak{sl}_2(\C)$ on the basis vectors $e_{I}$, where $I \subseteq [n] \times \{\downarrow, \uparrow\}$:
\begin{equation}\label{eq:wedgeaction}
    g(e_I) = \sum_{j = 1}^d e_{i_1\alpha_1} \wedge \cdots \wedge g(e_{i_j \alpha_j}) \wedge \cdots \wedge e_{i_d\alpha_d} = \sum_{j = 1}^d e_{i_1\alpha_1} \wedge \cdots \wedge (e_{i_j} \otimes ge_{\alpha_j}) \wedge \cdots \wedge e_{i_d\alpha_d}.
\end{equation}
We say a quantum state is \emph{spin adapted} if it vanishes under (\ref{eq:wedgeaction}).  
Our goal is to give an explicit combinatorial model (i.e., an explicit vector space basis) for the space of spin adapted quantum states $\mathcal{H}_{m, d}^{\operatorname{SU}(2)}$.

\begin{remark}\label{rem:pair}
    The operator $S_z$ counts the total spin of the quantum state represented by each basis vector of $\mathcal{H}_{m,d}$. More specifically, for $e_{I\downarrow}\wedge e_{J\uparrow} = e_{i_1\downarrow} \wedge \cdots \wedge e_{i_p\downarrow} \wedge e_{j_1 \uparrow} \wedge \cdots \wedge e_{j_q \uparrow}$,
    $$
    S_z(e_{I\downarrow}\wedge e_{J\uparrow}) = (|J| - |I|)(e_{I\downarrow}\wedge e_{J\uparrow}).
    $$
    The kernel of $S_z$ is thus generated by $ e_{I \downarrow} \wedge e_{J \uparrow}$ where $|I| = |J|$. Hence the vectors in $\mathcal{H}_{m,d}^{\operatorname{SU}(2)}\subseteq\ker(S_z)$ are supported only on the basis vectors with total spin zero. 
\end{remark}
 Remark~\ref{rem:pair} implies that the dimension of the invariant space $\mathcal{H}_{m, d}^{\operatorname{SU}(2)}$ is $0$ whenever $d$ is odd. We henceforth assume $d$ is even and write $d = 2k$ (i.e., we assume our system has $k$ spin-up and $k$ spin-down electrons). In this case, Theorem 3.6 in \cite{FaulstichSverrisdottir} states that 
\begin{equation}\label{eq:invdim}
    \dim_\C\left(\mathcal{H}_{m, 2k}^{\operatorname{SU}(2)}\right) = N(m + 1, k + 1).
\end{equation}

To construct our model, we first observe that the quantum states in $\mathcal{H}_{m, 2k}$ admit an alternative description via 
operators applied to the \textit{reference state} $e_{1, \downarrow} \wedge e_{1, \uparrow} \wedge \cdots \wedge e_{k, \downarrow} \wedge e_{k, \uparrow}$.
Define the \emph{creation operators} $a_{b\alpha}^\dagger$ and \emph{annihilation operators} $a_{i\alpha}$ on the full exterior algebra $\bigwedge\mathcal{Z}_{m}$ for $1\leq i \le k < b\leq m$ and $\alpha \in \{\downarrow, \uparrow\}$ as follows:
\[a_{b\alpha}^\dagger\colon  \bigwedge\mathcal{Z}_m \to \bigwedge \mathcal{Z}_m, \quad \psi \mapsto  e_{b\alpha} \wedge \psi, \quad\quad a_{i\alpha}\colon  \bigwedge\mathcal{Z}_m \to \bigwedge \mathcal{Z}_m, \quad \psi \mapsto e_{i\alpha} \,\lrcorner\, \psi.\]
Here, $\lrcorner$ is the dual operation to the wedge product $\wedge$, see \cite[Section~3.6]{GallierQuaintance} for further explanation. 
The names of these endomorphisms reflect their physical interpretation as creating and destroying particles on given orbitals.
One can check that they anti-commute, that is,
\[a_{i\alpha}a_{j\beta} + a_{j\beta}a_{i\alpha} = a_{b\alpha}^\dagger a_{c\beta}^\dagger + a_{c\beta}^\dagger a_{b\alpha}^\dagger = a_{b \alpha}^\dagger a_{i\beta} + a_{i \beta}a_{b \alpha}^\dagger = 0.\]
The operators generate an exterior algebra whose elements are endomorphisms of $\bigwedge\mathcal{Z}_{m}$.
Since we wish to study the space of quantum states $\mathcal{H}_{m, 2k}$ (i.e., the $2k$th graded component of $\bigwedge\mathcal{Z}_m$), we restrict our attention to operators that preserve the total number of electrons. These operators form a subalgebra $R_{m,2k}$ of endomorphisms on $\mathcal{H}_{m, 2k}$, 
generated by all products $a_{ b\alpha}^\dagger a_{i\beta}$ where $1 \le i \le k < b \le m$ and $\alpha, \beta \in \{\downarrow, \uparrow\}$. 
The anti-commutativity relations above imply that $R_{m, 2k}$ is an Artinian commutative ring.
Note, it is also an $\operatorname{SU}(2)$-module -- we can define a canonical action on $R_{m,2k}$ as a space of endomorphisms on $\mathcal{H}_{m,2k}$.  

\begin{prop}[{\cite[Proposition 4.2]{FaulstichSverrisdottir}}]
    The space of quantum states $\mathcal{H}_{m,2k}$ is isomorphic to the ring $R_{m,2k}$ as an $\operatorname{SU}(2)$-module. In particular $R_{m, 2k}^{\operatorname{SU}(2)}\cong\mathcal{H}_{m, 2k}^{\operatorname{SU}(2)}$.
\end{prop}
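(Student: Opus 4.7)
The plan is to study the evaluation map
\[\phi\colon R_{m,2k}\to\mathcal{H}_{m,2k}, \quad T \mapsto T(\psi_0),\]
where $\psi_0 := e_{1\downarrow}\wedge e_{1\uparrow}\wedge\cdots\wedge e_{k\downarrow}\wedge e_{k\uparrow}$ is the reference state, and show that $\phi$ is an $\operatorname{SU}(2)$-equivariant linear isomorphism. Since $R_{m,2k}\subseteq\operatorname{End}(\mathcal{H}_{m,2k})$, the canonical $\operatorname{SU}(2)$-action on $R_{m,2k}$ is by conjugation, $g\cdot T = gTg^{-1}$, so it suffices to establish equivariance and bijectivity of $\phi$.

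The first step is to verify that $\psi_0$ is $\operatorname{SU}(2)$-invariant. Using the isomorphism $\bigwedge(\C^m\otimes\C^2) \cong \bigotimes_{i=1}^m \bigwedge\C^2$ of graded algebras, $\psi_0$ decomposes as a tensor product of doubly-occupied spin factors $e_{i\downarrow}\wedge e_{i\uparrow}\in \wedge_2\C^2$ for $i\le k$ and empty factors $1\in\wedge_0\C^2$ for $i>k$. Both $\wedge_0\C^2$ and $\wedge_2\C^2$ (the latter is the determinant representation) are trivial $\operatorname{SU}(2)$-representations, so $\psi_0$ is fixed. Equivariance of $\phi$ then follows immediately: $\phi(g\cdot T) = gTg^{-1}(\psi_0) = gT(\psi_0) = g\cdot\phi(T)$.

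Next, I would establish bijectivity by exhibiting a spanning set of $R_{m,2k}$ that maps to the standard basis of $\mathcal{H}_{m,2k}$. For each basis vector $e_I$ of $\mathcal{H}_{m,2k}$ with $I\subseteq[m]\times\{\downarrow,\uparrow\}$ and $|I|=2k$, define the \emph{holes} $H_I := ([k]\times\{\downarrow,\uparrow\})\setminus I$ and the \emph{excitations} $E_I := I\setminus([k]\times\{\downarrow,\uparrow\})$, which satisfy $|H_I| = |E_I|$. The canonical excitation product
\[T_{H_I, E_I} := \prod_{(b,\alpha)\in E_I}a_{b\alpha}^\dagger\cdot\prod_{(i,\beta)\in H_I}a_{i\beta}\]
(in any fixed ordering) then satisfies $\phi(T_{H_I, E_I}) = \pm e_I$, giving surjectivity. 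Conversely, using the anti-commutativity of the generators and the nilpotency $a_{i\alpha}^2 = (a_{b\alpha}^\dagger)^2 = 0$, every product of generators reduces up to sign to some $T_{H,E}$ with $|H| = |E|$, so the $T_{H,E}$ span $R_{m,2k}$. Counting pairs via Vandermonde's identity gives $\sum_{r\ge 0}\binom{2k}{r}\binom{2(m-k)}{r} = \binom{2m}{2k} = \dim_\C\mathcal{H}_{m,2k}$, so $\phi$ is a surjection between spaces of equal dimension, hence an isomorphism.

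The main obstacle is the normal-form step: bringing an arbitrary product of generators into the canonical form $\pm T_{H,E}$ requires careful Koszul-sign bookkeeping when moving creation operators past annihilation operators, and one must verify that products with repeated creation or annihilation indices vanish. These are routine Clifford-algebra manipulations, but essential for the dimension to land exactly on $\binom{2m}{2k}$ rather than a larger upper bound. Once the normal form is in hand, the rest of the argument is formal.
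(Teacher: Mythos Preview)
Your proposal is correct and follows essentially the same approach as the paper: the evaluation map $T\mapsto T(\psi_0)$ is precisely the inverse of the paper's map $v\mapsto r_v$, and your invariance-of-$\psi_0$ argument plus the normal-form/dimension count fill in the details that the paper defers to the cited reference. The only cosmetic point is that your $T_{H,E}$ must be rewritten as a product of the generators $a_{b\alpha}^\dagger a_{i\beta}$ (by pairing $H$ with $E$ and using anti-commutativity) to confirm membership in $R_{m,2k}$, but you already have all the ingredients for this.
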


\begin{proof}
    For each basis vector $v\in\mathcal{H}_{m, 2k}$ there is a  unique product of generators of $R_{m, 2k}$ (say $r_v$) mapping the reference state of $\mathcal{H}_{m, 2k}$ to $v$. 
    Proposition 4.2 of \cite{FaulstichSverrisdottir} shows that the map $v\mapsto r_v$ is an isomorphism of $\operatorname{SU}(2)$-modules.
\end{proof}

A vector space basis for $R_{m, 2k}^{\operatorname{SU}(2)}$ will serve as our desired model for $\mathcal{H}_{m, 2k}^{\operatorname{SU}(2)}$. 
To that end we define the \textit{excitation operators} as elements in $R_{m, 2k}$:
\[X_{i,j} = a_{(j+k) \downarrow}^\dagger a_{i \downarrow} + a_{(j+k) \uparrow}^\dagger a_{i \uparrow}, \quad i\in[k] \text{ and } j\in[m-k].\]
They vanish under the $\mathfrak{sl}_2(\C)$-action on $R_{m,2k}$ and are therefore elements of its invariant ring.

\begin{theorem}\label{thm:noncompar}
    The excitation operators parameterize the excitation ring $S_{m,k}$. 
\end{theorem}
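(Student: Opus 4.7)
The plan is to upgrade the assignment sending each indeterminate $X_{i,j}$ to the excitation operator of the same name into a ring isomorphism $\bar\varphi : S_{m,k} \xrightarrow{\sim} R_{m,2k}^{\operatorname{SU}(2)}$. The universal property of $\C[X]$, combined with the commutativity of $R_{m,2k}$, immediately furnishes a $\C$-algebra homomorphism $\varphi : \C[X] \to R_{m,2k}$; its image lies inside the invariant subring because each excitation operator was already observed to be $\mathfrak{sl}_2(\C)$-invariant. The remaining content of the theorem is then the assertion that $\ker\varphi = I_{m,k}$ and $\operatorname{im}\varphi = R_{m,2k}^{\operatorname{SU}(2)}$.

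The first step is to verify that $\varphi(f^{a,b,c}_{p,q,r}) = 0$ for each generator in (\ref{eq:gens}), so that $\varphi$ factors through $S_{m,k}$. I would expand each $X_{i,j}$ as $\sum_{\alpha} a^\dagger_{(j+k)\alpha} a_{i\alpha}$ and write $\sum_{\sigma \in \mathfrak{S}_{\{a,b,c\}}} \varphi(X_{p,\sigma(a)} X_{q,\sigma(b)} X_{r,\sigma(c)})$ as a sum over spin triples $(\alpha_1,\alpha_2,\alpha_3) \in \{\downarrow,\uparrow\}^3$ and permutations $\sigma$. After normal ordering via the anti-commutation relations, the result is a product of three creation operators on the spin space $\C^2$ symmetrized by $\mathfrak{S}_3$ in their spin labels. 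Because $\dim\C^2 = 2$ and $(a^\dagger_{b\alpha})^2 = 0$, the fully symmetrized rank-three tensor in $(\C^2)^{\otimes 3}$ vanishes, yielding $\varphi(f^{a,b,c}_{p,q,r}) = 0$. This is essentially the Pauli exclusion principle for three fermions in a two-dimensional spin space, and it descends $\varphi$ to $\bar\varphi : S_{m,k} \to R_{m,2k}^{\operatorname{SU}(2)}$.

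The second step is to compare dimensions. Theorem~\ref{thm:degofex} gives $\dim_\C S_{m,k} = N(m+1,k+1)$. The preceding proposition identifies $R_{m,2k} \cong \mathcal{H}_{m,2k}$ as $\operatorname{SU}(2)$-modules, so $\dim_\C R_{m,2k}^{\operatorname{SU}(2)} = \dim_\C \mathcal{H}_{m,2k}^{\operatorname{SU}(2)} = N(m+1,k+1)$ by equation (\ref{eq:invdim}). Hence $\bar\varphi$ is an isomorphism as soon as it is either injective or surjective. Surjectivity is the more natural target: the first fundamental theorem of invariant theory for $\operatorname{SL}_2(\C)$, applied to the $\operatorname{SU}(2)$-action on the spin space $\C^2$, implies that the invariants of the algebra generated by the bilinear operators $a^\dagger_{b\alpha} a_{i\beta}$ are spanned by two-fold spin contractions, which are exactly the $X_{i,j}$.

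The main obstacle is the surjectivity step. The cubic vanishing in the first step is routine but sign-sensitive, and the dimension comparison is immediate from results already in place; but showing that the excitation operators actually generate all of $R_{m,2k}^{\operatorname{SU}(2)}$ (rather than only a proper invariant subring) requires a careful invariant-theoretic input on a fermionic algebra. A combinatorial alternative would be to prove injectivity directly: for each standard monomial indexed by Corollary~\ref{cor:stdmono}, exhibit a distinguished spin-adapted quantum state in $\mathcal{H}_{m,2k}^{\operatorname{SU}(2)}$ on which the corresponding operator acts non-trivially, witnessing linear independence of the image and forcing $\bar\varphi$ to be an isomorphism.
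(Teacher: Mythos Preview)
Your proposal aims at considerably more than the paper actually proves here. The paper's proof of this theorem is a two-line affair: it asserts that the excitation operators satisfy the cubic relations~(\ref{eq:gens}) and cites \cite{FaulstichSverrisdottir} for the verification. That is all---the theorem, as the paper treats it, only produces the well-defined homomorphism $\bar\varphi:S_{m,k}\to R_{m,2k}^{\operatorname{SU}(2)}$. The identification $S_{m,k}\cong R_{m,2k}^{\operatorname{SU}(2)}$ that you are trying to establish is the content of the subsequent \emph{corollary}, whose proof combines this map with Theorem~\ref{thm:degofex} and equation~(\ref{eq:invdim}) to match dimensions. So your three-step plan is really a proof of the theorem and corollary together.

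On your Step~1: the conclusion is right but the mechanism is stated backwards. The symmetrization in $f_{p,q,r}^{a,b,c}$ is over the \emph{column indices} $a,b,c$, not over spin labels. What happens is that after normal ordering, the symmetrization over column indices meets the anti-commutativity of the creation operators and transmutes into an \emph{anti}-symmetrization over the three spin labels; since spins live in $\C^2$, any alternating function of three spin values vanishes. That is the Pauli-exclusion content you are reaching for.

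On your Step~3: you are right that the dimension equality alone does not finish the argument---one still needs either injectivity or surjectivity of $\bar\varphi$. The paper's corollary invokes the theorem to assert ``isomorphic to a subring,'' i.e.\ injectivity, with the actual work again deferred to \cite{FaulstichSverrisdottir}. Your proposed route via the first fundamental theorem for $\operatorname{SL}_2$ is genuinely different, but has a real obstacle: $R_{m,2k}$ is not a polynomial ring in copies of $\C^2$ but a nilpotent, exterior-type algebra, so the classical FFT does not apply directly and would need a super-algebra version. Your combinatorial alternative---proving injectivity by showing that the images of the standard monomials act nontrivially on distinguished reference states---is closer in spirit to what the cited reference presumably does, and is the more tractable path.
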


\begin{proof}
    One can check that the $X_{i, j}$ fulfill the degree-three relations (\ref{eq:gens}) and may therefore be identified with the generators of the excitation ring $S_{m, k}$. See the proof of Theorem 4.3 in \cite{FaulstichSverrisdottir} for details.
\end{proof}

The following direct corollary of Theorems~\ref{thm:degofex} and ~\ref{thm:noncompar} is a fundamental result. It describes an explicit vector space basis for $R_{m,2k}^{\operatorname{SU}(2)}$, giving the desired combinatorial model for $\mathcal{H}_{m,2k}^{\operatorname{SU}(2)}$.

\begin{corollary}
    The excitation ring $S_{m,k}$ is isomorphic to the invariant ring of $R_{m,2k}$. The basis vectors of $R_{m,2k}^{\operatorname{SU}(2)}$ are in one-to-one correspondence with $k \times (m - k) \times 2$ plane partitions.
\end{corollary}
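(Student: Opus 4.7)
The plan is to combine Theorem~\ref{thm:degofex}, Theorem~\ref{thm:noncompar}, and the preceding proposition via a dimension count. Theorem~\ref{thm:noncompar} tells us that the excitation operators $X_{i,j}\in R_{m,2k}$ satisfy the defining cubic relations of $S_{m,k}$, so there is a well-defined ring homomorphism
\[
\varphi \colon S_{m,k} \longrightarrow R_{m,2k}
\]
sending each indeterminate $X_{i,j}$ to the corresponding excitation operator. Since each excitation operator is annihilated by the $\mathfrak{sl}_2(\C)$-action, the image of $\varphi$ is contained in $R_{m,2k}^{\operatorname{SU}(2)}$. The argument in the proof of Theorem~4.3 of \cite{FaulstichSverrisdottir} further shows that the excitation operators in fact generate the full invariant ring, so $\varphi$ surjects onto $R_{m,2k}^{\operatorname{SU}(2)}$.

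To upgrade surjectivity to an isomorphism I would compare dimensions on the two sides. Theorem~\ref{thm:degofex} gives $\dim_\C S_{m,k} = N(m+1,k+1)$. On the other hand, the proposition preceding Theorem~\ref{thm:noncompar} yields $R_{m,2k}^{\operatorname{SU}(2)} \cong \mathcal{H}_{m,2k}^{\operatorname{SU}(2)}$ as vector spaces, and equation (\ref{eq:invdim}) identifies this common dimension as $N(m+1,k+1)$. Thus $\varphi$ is a surjection between finite-dimensional vector spaces of the same dimension, hence an isomorphism, which establishes the first claim.

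For the second claim I would transport the standard monomial basis of $S_{m,k}$ through $\varphi$. Corollary~\ref{cor:stdmono} realizes this basis as the set of $k\times(m-k)$ nonnegative integer matrices of width at most $2$, and combining Theorem~\ref{thm:RSK} with Theorem~\ref{thm:hopkinsRSK} (specialized to $c=2$) produces an explicit bijection between this set and $\mathcal{B}(k,m-k,2)$, the set of $k\times(m-k)\times 2$ plane partitions. Pushing forward through the isomorphism $\varphi$ then yields the desired explicit basis of $R_{m,2k}^{\operatorname{SU}(2)}$ indexed by plane partitions.

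There is no real obstacle here: the deep content has already been absorbed into Theorems~\ref{thm:degofex} and~\ref{thm:noncompar}, together with (\ref{eq:invdim}) from \cite{FaulstichSverrisdottir}. The only point requiring care is verifying that the image of $\varphi$ is all of $R_{m,2k}^{\operatorname{SU}(2)}$ rather than a proper invariant subring, which is why the generation statement from \cite{FaulstichSverrisdottir} must be invoked; once surjectivity and the dimension match are both in hand, the corollary follows formally.
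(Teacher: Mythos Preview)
Your proposal is correct and follows essentially the same route as the paper: establish a ring homomorphism $\varphi\colon S_{m,k}\to R_{m,2k}^{\operatorname{SU}(2)}$ via Theorem~\ref{thm:noncompar}, then conclude it is an isomorphism by matching the dimension from Theorem~\ref{thm:degofex} against that from equation~(\ref{eq:invdim}); the plane-partition indexing of the basis is then obtained exactly as you describe, via Corollary~\ref{cor:stdmono} together with Theorems~\ref{thm:RSK} and~\ref{thm:hopkinsRSK}.

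The one minor difference is which half of bijectivity you secure before the dimension count. You argue that $\varphi$ is \emph{surjective} (the excitation operators generate the whole invariant ring, invoking \cite{FaulstichSverrisdottir}), whereas the paper instead asserts that $\varphi$ is \emph{injective}, phrasing Theorem~\ref{thm:noncompar} as ``$S_{m,k}$ is isomorphic to a subring of $R_{m,2k}^{\operatorname{SU}(2)}$.'' Either direction, combined with equality of dimensions, forces the other; so the two arguments are logically dual and equally valid. The paper's phrasing has the slight advantage that it only concerns the subalgebra generated by the $X_{i,j}$ and does not require knowing in advance that these operators span the full invariant space, but your version is just as sound provided the generation claim is indeed available in \cite{FaulstichSverrisdottir}.
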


\begin{proof}

By Theorem \ref{thm:noncompar} the excitation ring is isomorphic to a subring of $R_{m,2k}^{\operatorname{SU}(2)}$.  These rings also have the same dimension as vector spaces by Theorem~\ref{thm:degofex} and equation (\ref{eq:invdim}). Thus $S_{m,k} \cong R_{m,2k}^{\operatorname{SU}(2)}$. 
The second claim follows by applying the RSK bijections of Theorems~\ref{thm:RSK} and ~\ref{thm:hopkinsRSK} to the standard monomial basis for $S_{m, k}$ from Corollary~\ref{cor:stdmono}.
\end{proof}

We conclude this article by exhibiting a symmetry commonly appearing in quantum chemistry. This symmetry appears at every level of our reasoning, from quantum chemistry through algebra, and into combinatorics.

\begin{remark}[Particle-hole symmetry]
    For any $m,k$ with $k\leq m$, the excitation rings $S_{m,k}$ and $S_{m,m-k}$ have the same dimension, since
    \[
    N(m+1,k+1) \! = \!\frac{1}{m+1} \binom{m+1}{k+1}\!\binom{m+1}{k} \! = \! \frac{1}{m+1} \binom{m+1}{m-k}\!\binom{m+1}{m+1-k}\! = \! N(m+1,m-k+1).
    \]
    In quantum chemistry there exists a classical bijection between $\mathcal{H}_{m,2k}^{\operatorname{SU}(2)}$ and $\mathcal{H}_{m, 2m - 2k}^{\operatorname{SU}(2)}$ known as the \textit{particle-hole symmetry}. It is defined by the map $e_{J \uparrow} \wedge e_{K \downarrow} \mapsto e_{[m] \backslash J \uparrow} \wedge e_{[m] \backslash K \downarrow}$, sending quantum states to their duals. Since $S_{m,k}$ is isomorphic to the invariant space $\mathcal{H}_{m,2k}^{\operatorname{SU}(2)}$ the map lifts to a bijection between the standard monomials of $S_{m,k}$ and $S_{m, m - k}$. 
    Similarly, the map ${\sf Mat}_{k, m-k}(\Z_{\geq 0})\to {\sf Mat}_{m-k, k}(\Z_{\geq 0})$ sending each matrix $M$ to its transpose $M^T$ restricts to a bijection between the standard monomials of $S_{m,k}$ and those of $S_{m,m-k}$.
    The plane partition $B'$ corresponding to $M^T$ under the RSK correspondence of Theorem~\ref{thm:hopkinsRSK} is the transpose of the plane partition $B$ corresponding to $M$. One can check that all three bijections are equal.  
    We are not aware of a similarly nice bijection between $\mathcal{D}(m+1,k+1)$ and $\mathcal{D}(m+1,m-k+1)$.
\end{remark}

\section*{Acknowledgements}
We thank Bernd Sturmfels for providing feedback on an early draft of this paper.
AP and AS were partially supported by NSF RTG in Combinatorics (DMS 1937241) during the preparation of this material. AS was also supported by an NSF graduate fellowship under Grant No. DGE 21-46756.

\end{document}